\newtheorem{theorem}{Theorem}[section]
\newtheorem{corollary}[theorem]{Corollary}
\newtheorem{proposition}[theorem]{Proposition}
\theoremstyle{definition}
\theoremstyle{remark}
\newtheorem{remark}[theorem]{Remark}
\newcommand{\bP}{\mathbb P}
\newcommand{\bC}{\mathbb C}
\newcommand{\bR}{\mathbb R}
\newcommand{\bB}{\mathbb B}
\def\beq{\begin{equation}}
\def\eeq{\end{equation}}
\newcommand{\norm}{|\!|}
\DeclareMathOperator{\im}{Im}
\DeclareMathOperator{\re}{Re}
\begin{document}
\title[The Log term in the Bergman and Szeg\H o kernels]{The Log term in the Bergman and Szeg\H o kernels in strictly pseudoconvex domains in $\bC^2$}
\author{Peter Ebenfelt}
\address{Department of Mathematics, University of California at San Diego, La Jolla, CA 92093-0112}
\email{pebenfel@math.ucsd.edu}

\abstract In this paper, we consider bounded strictly pseudoconvex domains $D\subset \bC^2$ with smooth boundary $M=M^3:=\partial D$. If we consider the asymptotic expansion of the Bergman kernel on the diagonal
$$
K_B\sim \frac{\phi_B}{\rho^{n+1}}+\psi_B\log\rho,
$$
where $\rho>0$ is a Fefferman defining equation for $D$, then it is well known that the trace of the log term $b\psi_B:=(\psi_B)|_M$ on $M$ does not determine the CR geometry of $M$ locally; e.g., the vanishing of $b\psi_B$ on an open subset of $M$ does not imply that $M$ is locally spherical there. Nevertheless, the main result in this paper is that if $D\subset \bC^2$ is assumed to have transverse symmetry, then the global vanishing of $b\psi_B$ on $M$ implies that $M$ is locally spherical. A similar result is proved for the Szeg\H o kernel. 
\endabstract

\thanks{The author was supported in part by the NSF grant DMS-1600701.}

\thanks{2000 {\em Mathematics Subject Classification}. 32H02, 32V30}

\maketitle

\section{Introduction}

Let $D\subset\bC^n$ be a bounded strictly pseudoconvex domain with smooth boundary $\partial D$, and assume that $D$ is defined by $\rho>0$, where $\rho\in C^\infty(\overline{D})$ is a defining function for the boundary $\partial D$, i.e., $\rho|_{\partial\Omega}=0$ and $d\rho|_{\partial D}\neq 0$. Fefferman proposed (\cite{Feff76}, \cite{Feff79}) investigating the biholomorphic geometry of $D$ (e.g., the Bergman kernel) and the CR geometry of the boundary $M=M^{2n-1}:=\partial D$ via invariants obtained by restricting to a class of special defining functions $\rho$ normalized by $J(\rho)=1+O(\rho^{n+1})$, where $J$ is the complex Monge-Amp\`ere operator
\beq\label{Jop}
J(u):=(-1)^{n}\det
\begin{pmatrix}
u&u_{\bar z_k}\\
u_{z_j} & u_{z_j \bar z_k}
\end{pmatrix}.
\eeq
Fefferman showed in \cite{Feff76} that such a smooth defining function exists, and that it is unique mod $O(\rho^{n+2})$. A defining function $\rho$ satisfying this normalization is called a {\it Fefferman defining function}.  The work of Cheng--Yau \cite{ChengYau80}, combined with the subsequent work of Lee--Melrose \cite{LeeMelrose82}, shows that the Dirichlet problem
\beq\label{JDir}
J(u)=1,\quad  u|_M=0
\eeq
has a unique non-negative solution $u\in C^\infty(D)\cap C^{n+2-\epsilon}(\overline{D})$, which has an asymptotic expansion of the form
\beq\label{uexp}
u\sim \rho\sum_{k=0}^\infty \eta_k(\rho^{n+1}\log\rho)^k,\quad \eta_k\in C^\infty(\overline{D}).
\eeq
One observes that in the case of the unit ball $D=\bB^n\subset \bC^n$, the solution $u$ to \eqref{JDir} coincides with the standard defining function $\rho=1-\norm z\norm^2$; thus, there is no singularity in this case and we can take $\eta_k=0$ for $k\geq 1$. Graham (\cite{Graham87full}, \cite{Graham87}) showed that the boundary value problem \eqref{JDir} can be solved formally near a point on $M$, yielding a family of formal solutions $u$ of the form \eqref{uexp} that depend on one additional parameter function (which adds a condition on a normal derivative, to complete the Cauchy data for the local problem \eqref{JDir}). Moreover, the functions $\eta_k$ for $k\geq 1$, which make up the singularity of the solution, are uniquely determined mod $O(\rho^{n+1})$ by the local CR geometry of the boundary $M$ only (independent of the additional parameter function and choice of Fefferman defining function $\rho$). In particular, the functions $b\eta_k:=\eta_k|_M$ are uniquely determined smooth functions on the boundary $M$. Indeed, Graham proved that the $b\eta_k$ are local CR invariants of weight $(n+1)k$; (see, e.g., \cite{Graham87full}, \cite{Graham87} for the formal definition of this notion; see also below). Graham also showed, on the one hand, that if $b\eta_1=0$ on $M$, then all the functions $\eta_k$, $k\geq 1$, vanish to infinite order on $M$ and, hence, there is a smooth function ($\rho\eta_0$ in the expansion \eqref{uexp}) that agrees with the solution $u$ to infinite order (as in the case of the unit ball $\bB^n$); on the other hand, he showed that there are strictly pseudoconvex hypersurfaces $M\subset \bC^n$ such that $b\eta_1=0$, but $M$ is not locally spherical (i.e., not locally equivalent to the sphere $\partial\bB^n$). In other words, information about the function $b\eta_1$ {\it locally} on the boundary $M$ determines completely the singularity (mod $O(\rho^{n+1})$) of the solution $u$ near $M$, but {\it does not} determine the CR geometry of $M$. It may still be the case, however, that information about $b\eta_1$ {\it globally} may determine the CR geometry of $M$. The main result in this paper is a result along these lines in $\bC^2$ for domains with transverse symmetry. The function $b\eta_1$ is sometimes referred to as the {\it obstruction function} (cf., \cite{Hirachi14}).

The solution $u$ to \eqref{JDir} is intimately related to the Bergman and Szeg\H o kernels of the domain $D\subset\bC^n$; these are the reproducing kernels of the holomorphic functions, respectively the boundary values of holomorphic functions, in $L^2(D)$ and $L^2(M,\sigma)$, where $\sigma$ denotes some choice of surface element on $M$. We recall (\cite{Fefferman74}, \cite{BoutetSjostrand76}) that the Bergman and Szeg\H o kernels, $K_B(Z)$ and $K_S(Z)$, of $D$ on the diagonal have asymptotic expansions of the form
\beq\label{BandS}
K_B\sim \frac{\phi_B}{\rho^{n+1}}+\psi_B\log\rho,\quad K_S\sim \frac{\phi_S}{\rho^{n}}+\psi_S\log\rho, \quad \phi_B,\phi_S,\psi_B,\psi_S\in C^\infty(\overline{D}),
\eeq
in terms of a Fefferman defining function $\rho$. To make the Szeg\H o kernel $K_S$ biholomorphically invariant, we have chosen here the invariant surface element on $M=\partial \Omega$ as in \cite{HKN93} (see also \cite{Lee88}, \cite{Hirachi93}). The functions $\phi_B$, $\phi_S$ are determined mod $O(\rho^{n+1})$, $O(\rho^{n})$, respectively, and $\psi_B$, $\psi_S$ are determined up to infinite order. In the special case $D=\bB^n$, both $\psi_B$ and $\psi_S$ vanish identically (and $\phi_B$, $\phi_S$ are constants). The Ramadanov Conjecture \cite{Ramadanov81} predicts the converse (for the Bergman kernel, although the conjecture has also been stated for the Szeg\H o kernel): {\it If $\psi_B$ (or $\psi_S$) vanishes to infinite order at $M$, then $D$ is biholomorphic to the unit ball $\bB^n$.} The conjecture is known to be true in $\bC^2$ (at least if $D$ is assumed simply connected with connected boundary) due to the work of Graham (who attributes it to Burns), but is still open for $n\geq 3$. The solution in $\bC^2$ hinges on the result, proved in \cite{Graham87}, that the asymptotic expansion of $\psi_B$ in this case is as follows:
\beq\label{K_Bexp}
\psi_B=a_1 \eta_1+a_2Q\rho+O(\rho^2), a_1,a_2\in\bC\setminus\{ 0\},
\eeq
where $Q$ is E. Cartan's 6th order (umbilical) invariant. Thus, if $\psi_B$ is $O(\rho^2)$, then we may conclude that $Q=0$ (by using the result of Graham that $b\eta_1=0\implies \eta_1=0$), which is well known \cite{Cartan33} to imply that $M$ is locally spherical. This proves that $D$ is biholomorphic to $\bB^2$, if $D$ is assumed simply connected with connected boundary, by the Riemann mapping theorem of Chern--Ji \cite{ChernJi96}. In subsequent work, Nakazawa \cite{Nakazawa94} (see also Boichu--Coeur\'e \cite{Boichu83}) proved that for complete Reinhardt domains, it suffices to assume that $\psi_B|_M=b\eta_1=0$ to conclude that $D$ is biholomorphic to $\bB^2$; the latter result is an example of a situation where global vanishing of $b\eta_1$ forces $M$ to be locally spherical. Analogous results hold for the Szeg\H o kernel, normalized by the invariant surface element on $M$, in view of the expansion of $\psi_S$ for $n=2$ due to Hirachi--Komatsu--Nakazawa \cite{HKN93}:
\beq\label{K_Sexp}
\psi_S=c_1 (b\eta_1)\rho+c_2Q\rho^2+O(\rho^3), c_1,c_2\in\bC\setminus\{ 0\}.
\eeq
The reader is also referred to subsequent work on CR invariants and the expansions of the Bergman and Szeg\H o kernels by, e.g., Bailey--Eastwood--Graham \cite{BaileyEtAl94}, Hirachi \cite{Hirachi00}, \cite{Hirachi06}, and others.

In this note, we shall consider the case $n=2$, i.e., bounded strictly pseudoconvex domains $D\subset \bC^2$. The boundary $M=M^3=\partial D$ is then a compact three dimensional strictly pseudoconvex CR manifold. As illustrated by the result of Graham mentioned above, the vanishing of $b\eta_1$ on an open subset $U\subset M$ does not imply that $U$ is locally spherical in general. Our main result, however, is that if $D$ has transverse symmetry, then the vanishing of $b\eta_1$ globally on $M$ implies that $M$ is locally spherical.

We recall that $D$ has {\it transverse symmetry} if there is a 1-parameter family of biholomorphisms of $\overline{D}$ such that its infinitesimal generator is transverse to the CR tangent space on the boundary $M:=\partial D$. Examples include {\it circular} domains, i.e., those for which $Z\in D$ if and only if the whole circle $T_Z:=\{e^{it}Z\colon t\in \bR\}$, is contained in $D$. In particular any Reinhardt domain is circular and, hence, has transverse symmetry. Our main result is the following:

\begin{theorem}\label{MainThmFeff} Let $D\subset \bC^2$ be a smooth bounded strictly pseudoconvex domain, and assume further that $D$ has transverse symmetry. Then, $b\eta_1=0$ on $M:=\partial D$ if and only if $M$ is locally spherical. If $D$ is simply connected and $M$ connected, then $b\eta_1=0$ on $M$ if and only if $D$ is biholomorphic to the unit ball $\bB^2\subset \bC^2$.
\end{theorem}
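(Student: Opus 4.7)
The plan is to dispose of the easy direction directly, then use the transverse symmetry to fix rigid coordinates and derive a global divergence identity for $b\eta_1$ on $M$ that forces Cartan's obstruction $Q$ to vanish; Cartan's theorem \cite{Cartan33} then yields local sphericity.

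For $(\Leftarrow)$, since $b\eta_1$ is a local CR invariant and $b\eta_1=0$ on the sphere $\partial\bB^2$ (where $u=1-\norm z\norm^2$ solves \eqref{JDir} exactly, so every $\eta_k\equiv 0$), local sphericity of $M$ immediately gives $b\eta_1=0$ on $M$. The final statement under the topological hypotheses then follows by combining local sphericity with the Chern--Ji CR Riemann mapping theorem \cite{ChernJi96}, exactly as in the discussion of the Ramadanov conjecture in the introduction.

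For the main implication $(\Rightarrow)$, I would first exploit the symmetry. Let $X$ be the infinitesimal generator of the 1-parameter group of biholomorphisms of $\overline{D}$; then $X$ is the real part of a holomorphic vector field near $\overline{D}$, transverse to $H(M)\subset TM$ along $M$. Averaging a Fefferman defining function over the flow of $X$ and invoking uniqueness of Fefferman defining functions modulo $O(\rho^{n+2})$ yields an $X$-invariant Fefferman defining function, hence $X$-invariant functions $\eta_k$ in \eqref{uexp}. Frobenius together with the transversality of $X$ then provides, near each boundary point, holomorphic coordinates $(z,w)=(z,u+iv)$ in which $X=\partial_u$ and $M=\{v=F(z,\bar z)\}$ for a smooth $F$ with $F_{z\bar z}>0$: the standard rigid normal form for transversally symmetric CR hypersurfaces.

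The heart of the argument is a pointwise identity for rigid hypersurfaces of the schematic form
\beq
b\eta_1 = c\,|Q|^2 + \nabla\cdot V,
\eeq
with $c>0$, $|\cdot|$ the invariant norm induced by the pseudo-Hermitian structure attached to $\rho$, and $V$ an $X$-invariant vector field on $M$. To produce this identity I would plug the rigid ansatz $v=F(z,\bar z)$ into Graham's iterative construction for $\eta_1$, compute Cartan's $Q$ as the standard sixth-order expression in $F$, and isolate the $|Q|^2$-contribution from the divergence tail; the $X$-invariance of all ingredients is what allows the locally computed objects to patch globally on $M$. Integrating over compact $M$ against the invariant surface measure kills the divergence term by Stokes, and the hypothesis $b\eta_1\equiv 0$ then forces $Q\equiv 0$ on $M$. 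The main obstacle is the derivation of the identity itself: the counterexamples in \cite{Graham87} show that no such pointwise splitting can hold on general strictly pseudoconvex hypersurfaces, so the proof must genuinely exploit the rigid form afforded by transverse symmetry, and pinpointing the correct ``good divergence'' term in the iterative expansion of $\eta_1$ is the technical crux.
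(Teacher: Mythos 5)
Your treatment of the easy direction and the appeal to Chern--Ji are fine, but the heart of your argument --- the pointwise identity $b\eta_1=c\,|Q|^2+\nabla\cdot V$ with $c>0$ --- is both unproved and, at least relative to the natural contact form of the symmetry, false. The paper's Theorems \ref{Thm-wt3} and \ref{QisGauss-trans} show that under transverse symmetry $b\eta_1$ is a nonzero constant multiple of $Q_{;11}$, which, computed in the pseudohermitian structure of the canonical contact form $\theta_0$ (the one whose Reeb field is the symmetry), equals a constant times $R_{;\bar 1\bar 1}{}^{\bar 1\bar 1}$ --- a pure divergence. Hence $\int_M b\eta_1\,\theta_0\wedge d\theta_0=0$ for \emph{every} compact $M$ with transverse symmetry, spherical or not. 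Your identity, integrated against $\theta_0\wedge d\theta_0$, would then give $\int_M|Q|^2=0$ for every such $M$, i.e.\ every transverse-symmetric compact $M^3$ would be spherical; this is contradicted by any non-spherical complete circular domain (equivalently, by Corollary \ref{CorKzzzz=0} applied to a circle bundle over a Riemann surface with non-constant curvature). There is also a weight mismatch: $b\eta_1$ has weight $3$ while $|Q|^2=|A^0_{24}|^2$ has weight $4$. The information in the hypothesis $b\eta_1\equiv0$ is simply not seen by integrating against $1$; the essential missing idea is to integrate against the Webster scalar curvature $R$ of $\theta_0$: two integrations by parts give $\int_M (R_{;\bar 1\bar 1}{}^{\bar 1\bar 1})\,R\,\theta_0\wedge d\theta_0=\int_M|R_{;\bar 1\bar 1}|^2\,\theta_0\wedge d\theta_0$, and $R_{;\bar 1\bar 1}$ is a nonzero multiple of $Q$. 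That is exactly Calabi's trick, which the paper runs as Proposition \ref{CalabiProp}.

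The second gap is structural: you propose to extract your identity by feeding the rigid ansatz into Graham's iterative construction of $\eta_1$ and concede that this is ``the technical crux.'' The paper never touches the Monge--Amp\`ere iteration. Instead it uses Graham's theorem that the space of weight-$3$ CR invariants is one-dimensional and spanned by $A^0_{44}$, together with $b\eta_1=4A^0_{44}$; it then suffices to exhibit \emph{one} nontrivial weight-$3$ invariant with the needed divergence structure, namely the second Cartan covariant derivative $Q_{;11}$, and to identify it, via the circle-bundle computation and its transplant to general transverse-symmetric $M$, with the fourth covariant derivative $R_{;\bar 1\bar 1 11}$ of the Webster curvature. Without that invariant-theoretic identification (or an honest derivation of a correct substitute identity, with the multiplier $R$ in place), your argument does not close.
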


In view of the expansions \eqref{K_Bexp} and \eqref{K_Sexp} of the log terms in the Bergman and Szeg\H o kernels, we obtain the following direct corollaries of Theorem \ref{MainThmFeff}:

\begin{corollary}\label{MainThmBerg} Let $D\subset \bC^2$ be a smooth bounded strictly pseudoconvex domain, and assume further that $D$ has transverse symmetry. Let $K_B$ denote the Bergman kernel of $D$ with asymptotic expansion given by \eqref{BandS}. Then, the log term $\psi_B|_M=0$ on $M:=\partial D$ if and only if $M$ is locally spherical. If $D$ is simply connected and $M$ connected, then $\psi_B|_M=0$ on $M$ if and only if $D$ is biholomorphic to the unit ball $\bB^2\subset \bC^2$.
\end{corollary}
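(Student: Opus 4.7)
The plan is to reduce Corollary \ref{MainThmBerg} directly to Theorem \ref{MainThmFeff} via the Graham expansion \eqref{K_Bexp} of the log coefficient of the Bergman kernel. Restricting \eqref{K_Bexp} to the boundary $M=\{\rho=0\}$ yields
\[
\psi_B|_M = a_1\,(b\eta_1),\qquad a_1\in\bC\setminus\{0\},
\]
so the condition $\psi_B|_M=0$ on $M$ is, as an identity of smooth functions on $M$, equivalent to $b\eta_1=0$ on $M$. Theorem \ref{MainThmFeff} then identifies this with the local sphericity of $M$, giving the first assertion. I should note that no additional regularity issue arises at this step because $\psi_B\in C^\infty(\overline{D})$ (from \eqref{BandS}) and $b\eta_1$ is a genuinely smooth function on $M$ by Graham's work.

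For the second assertion, under the extra hypotheses that $D$ is simply connected and $M$ is connected, I would invoke the CR Riemann mapping theorem of Chern--Ji \cite{ChernJi96}: a bounded simply connected strictly pseudoconvex domain in $\bC^2$ with smooth connected locally spherical boundary is biholomorphic to $\bB^2$. Combined with the first assertion, this upgrades $\psi_B|_M=0$ to biholomorphic equivalence with $\bB^2$; the reverse implication is the standard fact that for the unit ball $\psi_B\equiv 0$, so in particular $\psi_B|_M=0$.

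Since the corollary is a formal consequence of Theorem \ref{MainThmFeff} together with \eqref{K_Bexp} and the Chern--Ji theorem, there is no genuine obstacle in its proof beyond this bookkeeping. All of the mathematical substance lives in Theorem \ref{MainThmFeff}, whose proof is presumably the technical core of the paper; the role of transverse symmetry there is precisely to convert a hypothesis that is in general only local (the vanishing of the obstruction $b\eta_1$) into a global rigidity statement that forces the vanishing of Cartan's 6th-order invariant $Q$ throughout $M$, contrary to the local counterexamples of Graham in the absence of symmetry.
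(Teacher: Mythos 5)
Your proposal is correct and matches the paper's intent exactly: the paper presents this as a ``direct corollary'' of Theorem \ref{MainThmFeff} obtained by restricting the expansion \eqref{K_Bexp} to $M$, which gives $\psi_B|_M=a_1\,b\eta_1$ with $a_1\neq 0$, and then invoking Chern--Ji for the global statement. No further comment is needed.
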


\begin{corollary}\label{MainThmSzego} Let $D\subset \bC^2$ be a smooth bounded strictly pseudoconvex domain, and assume further that $D$ has transverse symmetry. Let $K_S$ denote the Szeg\H o kernel of $D$, normalized by the invariant surface element on $M:=\partial D$, with asymptotic expansion given by \eqref{BandS}. Then, the log term $\psi_S=O(\rho^2)$ on $M$  if and only if $M$ is locally spherical. If $D$ is simply connected and $M$ connected, then $\psi_S=O(\rho^2)$ on $M$ if and only if $D$ is biholomorphic to the unit ball $\bB^2\subset \bC^2$.
\end{corollary}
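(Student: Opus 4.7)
The corollary is essentially an immediate consequence of Theorem \ref{MainThmFeff} combined with the Hirachi--Komatsu--Nakazawa expansion \eqref{K_Sexp}, so the plan is to execute a short deduction rather than develop a standalone argument.

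First, I would read off from \eqref{K_Sexp} that $\psi_S = O(\rho^2)$ on $M$ is equivalent to the vanishing of the coefficient of $\rho$ in the expansion, namely to $c_1 \cdot b\eta_1 = 0$, and hence (as $c_1 \neq 0$) to $b\eta_1 = 0$ on $M$. More concretely, writing the expansion as $\psi_S = c_1 (b\eta_1)\rho + c_2 Q \rho^2 + O(\rho^3)$ and dividing through by $\rho$, the trace on $M$ is $c_1 \cdot b\eta_1$, which vanishes identically iff $b\eta_1$ does. Second, I would invoke Theorem \ref{MainThmFeff}, whose hypotheses (smooth bounded strictly pseudoconvex with transverse symmetry) coincide with those of the corollary, to translate $b\eta_1 = 0$ into local sphericity of $M$. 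Chaining the two equivalences yields the first assertion.

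For the second assertion, under simple connectedness of $D$ and connectedness of $M$, local sphericity of $M$ fits the hypotheses of the Chern--Ji CR Riemann mapping theorem \cite{ChernJi96}, furnishing a biholomorphism $D \to \bB^2$. Conversely, if $D$ is biholomorphic to $\bB^2$, then $\psi_S \equiv 0$ in a Fefferman defining function for $\bB^2$ (as noted following \eqref{BandS}), and since the log term transforms correctly under biholomorphisms, we have $\psi_S \equiv 0$ for $D$ as well, so in particular $\psi_S = O(\rho^2)$.

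The only nontrivial input is Theorem \ref{MainThmFeff}, so the main obstacle for the corollary is really the proof of that theorem. Its proof is expected to proceed by placing $M$ in the rigid normal form afforded by the transverse symmetry, computing the obstruction function $b\eta_1$ explicitly in those coordinates via Graham's formal algorithm for solving \eqref{JDir}, and then exploiting compactness of $M$ together with some self-adjoint elliptic structure on the quotient of $M$ by the symmetry group to conclude that the global vanishing of $b\eta_1$ forces Cartan's umbilical invariant $Q$ to vanish identically on $M$, whereupon local sphericity follows from the classical result of \cite{Cartan33}.
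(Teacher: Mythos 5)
Your deduction is correct and is exactly how the paper obtains this statement: it is declared a direct corollary of Theorem \ref{MainThmFeff} via the Hirachi--Komatsu--Nakazawa expansion \eqref{K_Sexp}, with $\psi_S=O(\rho^2)$ equivalent to $b\eta_1=0$ since $c_1\neq 0$, and the ball case handled by Chern--Ji. (Your closing speculation about how Theorem \ref{MainThmFeff} itself is proved is not needed for the corollary and differs in detail from the paper's actual route through $Q_{;11}$ and a Calabi-type integration by parts, but that does not affect the correctness of your proof of the corollary.)
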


We should briefly mention the role of the choice of surface element on $M$ in the Szeg\H o kernel $K_S$, since Corollary \ref{MainThmSzego} in the special case of complete circular domains appears similar to a result in \cite{LuTian04}. For each choice of contact form $\theta$ on $M$, one obtains a Szeg\H o kernel $K^\theta_S$ corresponding to the surface element $\sigma[\theta]:=\theta\wedge d\theta$ on $M$. The invariant surface element (\cite{HKN93}; see also \cite{Lee88} and \cite{Hirachi93}) corresponds to the unique choice of $\theta=\theta_0$ such that
\beq\label{invsurf}
\sigma[\theta_0]\wedge d\rho=J(\rho)^{1/(n+1)}dV,\quad dV=\frac{1}{-2i}\bigwedge_{j=1}^ndz_j\wedge d\bar z_j.
\eeq
It is shown in \cite{HKN93} that for the invariant surface element on $M$ in $\bC^2$, it holds that $\psi_S|_M=0$, where $\psi_S=\psi_S^{\theta_0}$. This leads to the form of the expansion indicated in \eqref{K_Sexp}. Hirachi further showed \cite{Hirachi93} that in fact
\beq
\psi_S^\theta|_M=\frac{1}{24\pi}(\Delta_b R-2\im A_{11;}{}^{11}),
\eeq
where $\Delta_b$, $R$, $A_{11}$ are the sublaplacian, the Tanaka--Webster scalar curvature, and the Tanaka-Webster torsion, respectively, of the pseudohermitian structure corresponding to $\theta$ (see \cite{Webster78}). Moreover, he showed that if $M$ has transverse symmetry then $\psi_S^\theta|_M=0$ if and only if $\theta=e^{2f}\theta_0$ for some pluriharmonic function $f$ on $M$.

In some situations, there may also be natural choices of surface element on $M$, other than the invariant one. For instance, if $D$ is the unit disk bundle in a negative holomorphic line bundle $L^*$ over a Riemann surface $X$, then a natural surface element is $\sigma=\omega\wedge dt$, where $-\omega$ is the K\"ahler form on $X$ obtained from the curvature form of $L^*$ and $t\mapsto (z,e^{it}\ell)$ the circle action on $M:=\partial D$. The Szeg\H o kernel corresponding to this surface element on the disk bundle (also over higher dimensional K\"ahler manifolds) has been considered by many authors. We mention here only \cite{Tian90}, \cite{Zelditch98}, \cite{Catlin99}, \cite{LuTian04}, and refer to these papers for further references. In particular, in \cite{LuTian04} the analog of Ramadanov's Conjecture above was considered for the Szeg\H o kernel in a disk bundle $D$ over the complex projective plane $\bP^1$ corresponding to the surface element $\sigma[\theta]=\omega\wedge dt$. The result in this case is that if the log term $\psi_S^{\theta}$ vanishes on $M$, then $\omega$ is the Fubini-Study form on $\bP^1$ (up to an automorphism $\bP^1\to \bP^1$), which is equivalent to the statement that $D$ is the blow-up of the origin in the unit ball $\bB^2\subset \bC^2$ (up to an automorphism). We wish to emphasize that while a complete circular domain in $\bC^2$, a special case of the main result in this paper, is the blow-down of a disk bundle over $\bP^1$, the assumptions in Corollary \ref{MainThmSzego} and in \cite{LuTian04} are different, as the Szeg\H o kernels are taken with respect to {\it a priori} different surface measures.

The paper is organized as follows. In Section \ref{S:general} we establish a correspondence  between the obstruction function $b\eta_1$ and the classical invariants of E. Cartan and Chern--Moser. In Section \ref{S:hlb} we consider the special case of disk bundles in (duals of) positive holomorphic line bundles. The calculations in this case are classical, and requires no prior experience with pseudohermitian geometry. In the subsequent section, we explain how the calculation in a CR manifold with transverse symmetry can be reduced to that in the disk bundle case. The final section \ref{S:proof} is then devoted to the proof of the main result, Theorem \ref{MainThmFeff}.

\section{The weight $\kappa=3$ invariant}\label{S:general}

Let $M=M^3$ be a three dimensional strictly pseudoconvex CR manifold, which we shall always assume to be locally embeddable as a real hypersurface in $\bC^n$, for some $n$. Recall that a CR invariant of a positive weight $\kappa$ is a polynomial in "data" associated with the CR structure that transforms under CR diffeomorphisms by scaling with the Jacobian of the diffeomorphism to the power $2\kappa/3$ (see, e.g., \cite{Graham87full}, \cite{Graham87}). Typical "data" are the covariant derivatives of the components of the Tanaka--Webster curvature and torsion, in which case CR invariants are special cases of pseudohermitian invariants (see e.g., \cite{Hirachi93}). Another approach is to use the coefficients $A^j_{kl}$ in the Chern--Moser normal form \cite{CM74} in (local or formal) coordinates $(z,w)\in \bC^2$:
\beq\label{CMnormal}
\im w=|z|^2+\sum_{k,l\geq 2}\sum_{j=1}^\infty A^j_{kl}z^k\bar z^l(\re w)^j;\quad A^j_{22}=A^j_{23}=A^j_{33}=0,\ j=0,1,2,\ldots.
\eeq
It was shown by R. C. Graham \cite{Graham87} that there are no (nontrivial) CR invariants of weight $\kappa=1,2$, and that the space of CR invariants of weight $3$ and $4$, respectively, is  1-dimensional and spanned by $A^0_{44}$ and $|A^0_{24}|^2$. It is well known that the coefficient $A^0_{24}$, while not a CR invariant of a positive weight in the sense of Graham (but rather of a "complex weight" of type (2,4)), represents E. Cartan's "6th order invariant" $Q=Q^1{}_{\bar 1}$ obtained in his solution to the CR equivalence problem for three dimensional strictly pseudoconvex CR manifolds \cite{Cartan33}. We shall show here that the weight 3 invariants, spanned in the Chern--Moser setup by $A^0_{44}$, can be also represented by a second order covariant derivative of Cartan's invariant $Q$. To explain this, we recall here E. Cartan's solution to the equivalence problem, following the exposition of Jacobowitz \cite{JacobowitzBook} (but with slightly different notation).

As above, let $M=M^3$ be a 3-dimensional strictly pseudoconvex CR manifold. There is an 8-dimensional bundle $\pi\colon B\to M$ and an invariantly defined coframe
\beq\label{Ccoframe}
\{\Omega,\Omega^1,\Omega^{\bar 1},\Omega^2, \Omega^{\bar 2}, \Omega^3, \Omega^{\bar 3},\Omega^4\},
\eeq
with $\Omega,\Omega^4$ real-valued, $\Omega^{\bar l}:=\overline{\Omega^l}$ for $l=1,2,3$, such that the following structure equations hold:
\beq\label{CStructure}
\begin{aligned}
d\Omega &=i\Omega^1\wedge\Omega^{\bar 1}-\Omega\wedge(\Omega^2+\Omega^{\bar 2})\\
d\Omega^1 &=-\Omega^1\wedge\Omega^2-\Omega\wedge\Omega^3\\
d\Omega^2 &= 2i\Omega^1\wedge\Omega^{\bar 3}+i\Omega^{\bar 1}\wedge \Omega^3-\Omega\wedge\Omega^4\\
d\Omega^3 &=-\Omega^1\wedge\Omega^4-\Omega^{\bar 2}\wedge\Omega^3-Q\Omega\wedge\Omega^{\bar 1}\\
d\Omega^4 &=i\Omega^3\wedge\Omega^{\bar 3}-(\Omega^2+\Omega^{\bar 2})\wedge\Omega^4-S\Omega\wedge\Omega^1-\bar S\Omega\wedge\Omega^{\bar 1},
\end{aligned}
\eeq
where $Q$ (Cartan's invariant) and $S$ are functions on $B$. Cartan showed that $M$ is spherical near a point $p\in M$ if and only if $Q$ vanishes over a neighborhood of $p$ in $M$. We may now construct new invariant functions on $B$ by taking "covariant" differentiations of the invariant functions $Q$ and $R$ with respect to the invariant coframe \eqref{Ccoframe}, e.g.,
\beq\label{Cdiff}
dQ=Q_{;0}\Omega+\sum_{l=1}^3(Q_{;l}\Omega^l+Q_{;\bar l}\Omega^{\bar l})+Q_{;4}\Omega^4.
\eeq
An easy calculation, differentiating the structure equation for $d\Omega^3$, reveals that $$
\bar S=Q_{1},
$$
and hence repeated covariant differentiation of $Q$ will yield all invariant functions. We claim that $Q_{;11}$ is a CR invariant of weight $\kappa=3$. We will first need to explain how a choice of contact form $\theta$ near a point $p\in M$ leads to a a polynomial expression in the Chern-Moser normal form coefficients in \eqref{CMnormal}. In order to carry this out, we shall compute $Q_{;11}$ in a special local coordinate system on $B$, following the book by Jacobowitz \cite{JacobowitzBook}. Let $\theta$ be a contact form on $M$, $x=(z,t)\in U\subset \bC\times \bR$ a local chart on $M=M^3$ such that $\{\theta,\theta^1\}$, with $\theta^1:=dz$, defines the CR structure on $M$. We shall normalize the choice of contact form $\theta$ so that the Levi form of $M$ with respect to $\theta^1=dz$ is one, i.e.,
\beq\label{normtheta}
d\theta=i\theta^1\wedge\theta^{\bar 1}+b\theta\wedge\theta^1+\bar b\theta\wedge\theta^{\bar 1},
\eeq
for some function $b=b(x)$ on $M$. As in \cite{JacobowitzBook}, we may then choose coordinates $(x,\lambda,\mu,\rho)\in U\times \bC\times\bC\times\bR$ on $\pi^{-1}(U)\subset B$ such that
\beq\label{Cforms}
\begin{aligned}
\Omega &=|\lambda|^2\theta\\
\Omega^1 &=\lambda(\theta^1+\mu\theta)\\
\Omega^2 &=\frac{d\lambda}{\lambda}+A\theta^1+B\theta^{\bar 1}+C\theta\\
\Omega^3 &=\frac{1}{\bar\lambda}\left(d\mu+D\omega^1+E\theta^{\bar 1}+F\theta\right)\\
\Omega^4 &=\frac{1}{|\lambda|^2}\left(d\rho+\frac{i}{2}(\mu d\bar\mu-\bar \mu d\mu)+H\theta^1+\bar H\theta^{\bar 1}+G\theta\right),
\end{aligned}
\eeq
where $A,B,C,D,E,F,G,H$ are functions in $x,\lambda,\mu,\rho$ explicitly computed in \cite{JacobowitzBook}. To compute $Q_{;11}$, we shall only require the expressions for $A,B,E$, which we reproduce here
\beq\label{ABE}
A=-(b+2i\bar \mu),\quad B=-i\mu,\quad E=-\mu(\bar b-i\mu).
\eeq
Next, we recall from \cite{JacobowitzBook} that in the coordinates $(x,\lambda,\mu,\rho)$,
\beq\label{Q=}
Q=\frac{r}{\lambda\bar\lambda^3},\quad r=r(x).
\eeq
We let $L_1$ be the $(1,0)$ vector field and $T$ the transversal vector field in $U\subset M$ such that the frame $\{T,L_1,L_{\bar 1}\}$ is dual to the coframe $\{\theta,\theta^1,\theta^{\bar 1}\}$ and compute
\beq
dQ =r\left(-\frac{d\lambda}{\lambda^2\bar\lambda^3}-3\frac{d\bar \lambda}{\lambda\bar\lambda^4}\right)+\frac{1}{\lambda\bar\lambda^3}(L_1r\, \theta^1+L_{\bar 1}r \, \theta^{\bar 1}+Tr\, \theta)
\eeq
and obtain, using \eqref{Cforms},
\beq
dQ= \frac{1}{\lambda\bar\lambda^3}\left(-r\Omega^2+rA\frac{\Omega^1}{\lambda}-3r\Omega^{\bar 2}+3r\bar B\frac{\Omega^1}{\lambda}+L_1r\, \frac{\Omega^1}{\lambda}\right )\mod \Omega,\Omega^{\bar 1}.
\eeq
Consequently, by using also \eqref{ABE}, we conclude
\beq
Q_{;1}=\frac{1}{\lambda^2\bar\lambda^3}(L_1r+r(-b+i\bar \mu))=\frac{1}{\lambda^2\bar\lambda^3}(L_1r-rb+ir\bar \mu).
\eeq
We differentiate again and obtain
\begin{multline}
dQ_{;1} =(L_1r-rb+ir\bar \mu)\left(-2\frac{d\lambda}{\lambda^3\bar\lambda^3}-3\frac{d\bar \lambda}{\lambda^2\bar\lambda^4}\right)+\frac{ir}{\lambda^2\bar\lambda^3}d\bar\mu\\ +
\frac{1}{\lambda^2\bar\lambda^3}(L_1^2 r-L_1(rb)+i L_1r\, \bar\mu)\theta^1 \mod \theta,\theta^{\bar 1}.
\end{multline}
Using again \eqref{Cforms}, we obtain
\begin{multline}
dQ_{;1} =\frac{1}{\lambda^2\bar\lambda^3}\bigg\{ (L_1r-rb+ir\bar \mu)\left(-2\Omega^2+2A\frac{\Omega^1}{\lambda}-3\Omega^{\bar 2}+3\bar B\frac{\Omega^{1}}{\lambda}\right)+ir\left(\lambda\Omega^{\bar 3}-\bar E\frac{\Omega^1}{\lambda}\right)
\\ +
(L_1^2 r-L_1(rb)+i L_1r\, \bar\mu)\frac{\Omega^1}{\lambda} \bigg\}
\mod \Omega,\Omega^{\bar 1}.
\end{multline}
Thus, we obtain
\beq\label{Q11-1}
Q_{;11}=\frac{1}{\lambda^3\bar\lambda^3}(L_1^2r-L_1(rb)+iL_1r\, \bar \mu + (L_1r-rb+ir\bar \mu)(2A+3\bar B)-ir \bar E).
\eeq
Applying again \eqref{ABE}, we find that
\beq
(L_1r-rb+ir\bar \mu)(2A+3\bar B)-ir \bar E=-2(L_1r)b+2rb^2-i(L_1r)\bar \mu,
\eeq
and hence we obtain from \eqref{Q11-1}
\beq\label{Q11-2}
Q_{;11}=\frac{1}{\lambda^3\bar\lambda^3}\left(L_1^2r-3(L_1r)b+r(2b^2-L_1b)\right).
\eeq
We note in particular that $Q_{;11}$ is of the form
\beq\label{Q11-3}
Q_{;11}=\frac{s(x)}{|\lambda|^6},
\eeq
where, in the special fiber coordinates $(\lambda,\mu,\rho)$ corresponding to the choice of $\{\theta,\theta^1\}$ above,
\beq \label{s(x)}
s(x)=L_1^2r(x)-3(L_1r(x))b(x)+r(x)(2b(x)^2-L_1b(x)).
\eeq
Now, we note that if $(z,w)\in \bC^2$ are formal Chern--Moser coordinates for $M$ centered at $p=(0,0)$ so that $M$ is formally given by an equation of the form \eqref{CMnormal},
which we write temporarily as
\beq\label{imw=phi}
\im w=\Phi(z,\bar z,\re w), \quad \Phi(z,\bar z,t):= |z|^2+\sum_{k,l\geq 2}\sum_{j=1}^\infty A^j_{kl}z^k\bar z^l t^j,
\eeq
then we may choose $x=(z,t)$ with $t:=\re w$ as local coordinates, and  we may use the contact form (cf.\ \cite{BER99a})
\beq\label{CMcontact}
\theta= \left(\frac{\partial}{\partial z}\frac{\Phi_{\bar z}}{1+i\Phi_t}-\frac{\partial}{\partial \bar z}\frac{\Phi_{z}}{1-i\Phi_t}\right)^{-1}\left (dt-i\frac{\Phi_{z}}{1-i\Phi_t} dz +i\frac{\Phi_{\bar z}}{1+i\Phi_t}d\bar z\right)
\eeq
in the calculations carried out above. We obtain an evaluation of $Q_{;11}$ on the contact form $\theta$ in \eqref{CMcontact} by evaluating \eqref{Q11-2} at $\lambda=1$; we denote this evaluation by $Q_{;11}[\theta]$. We now note that by the form of $\Phi(z,\bar z,t)$ given by \eqref{imw=phi},
\beq\label{Q11at0}
\left(\frac{\partial}{\partial z}\frac{\Phi_{\bar z}}{1+i\Phi_t}-\frac{\partial}{\partial \bar z}\frac{\Phi_{z}}{1-i\Phi_t}\right)\bigg|_{(z,t)=(0,0)}=1,
\eeq
and, hence, it follows from \eqref{Q11-2} that $Q_{;11}[\theta]$, evaluated at $p=(0,0)$, is a polynomial in the Chern--Moser coefficients $A^j_{kl}$. In fact, our main result in this section is the following:

\begin{theorem}\label{Thm-wt3} There is a universal constant $c\neq 0$, such that
\beq\label{Q11}
Q_{;11}[\theta]=cA^0_{44},
\eeq
where $\theta$ is given by \eqref{CMcontact} and \eqref{imw=phi}, and $A^0_{44}$ is the $z^4\bar z^4$ coefficient in the Chern--Moser normal form \eqref{CMnormal}.
\end{theorem}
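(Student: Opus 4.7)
The plan combines a weight-counting argument with a direct verification on a test model.

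\medskip

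First, I would argue on structural grounds that $Q_{;11}[\theta]|_{(0,0)}$, viewed via \eqref{Q11-2}--\eqref{Q11at0} as a universal polynomial in the Chern--Moser coefficients $A^j_{kl}$, is a CR invariant of Graham's real weight $\kappa=3$. This is encoded in Cartan's construction: the representation \eqref{Q=} shows that $Q$ on $B$ scales as $\lambda^{-1}\bar\lambda^{-3}$ under the fiber action, i.e., has complex weight $(2,4)$, and each $;1$-covariant derivative raises the $(1,0)$-weight by one, so $Q_{;11}$ has complex weight $(4,4)$, corresponding to real weight $3$. By Graham's theorem \cite{Graham87} that the space of weight-$3$ CR invariants is one-dimensional and spanned by $A^0_{44}$, we conclude
$$
Q_{;11}[\theta]\big|_{(0,0)} = c\,A^0_{44}
$$
for some universal constant $c\in\bC$; only $c\neq 0$ remains to verify.

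\medskip

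Second, to prove nonvanishing I would evaluate \eqref{s(x)} on the test model $\Phi(z,\bar z,t)=|z|^2+z^4\bar z^4$, for which $A^0_{44}=1$ is the only nonzero CM coefficient. On this model, $r(0)=0$, since $r(0)$ is a nonzero multiple of $A^0_{24}=\overline{A^0_{42}}=0$; and $(L_1r)(0)=0$ by weight counting, because the only CM coefficients that could contribute one derivative to $r$ at the origin are among $A^0_{25},A^0_{34},A^0_{43},A^0_{52}$, all zero on the model. Hence \eqref{s(x)} collapses to $s(0)=L_1^2 r(0)$ on the model, and it suffices to establish $L_1^2 r(0)\neq 0$.

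\medskip

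Third, I would compute $L_1^2 r(0)$ by identifying $r(p)$, for a nearby point $p=(z_0,t_0)\in M$, with a nonzero constant times the coefficient $\widetilde A^0_{24}(p)$ of $z^2\bar z^4$ in the CM normal form recentered at $p$. The recentering is implemented by a Heisenberg translation $(z,w)\mapsto(z+z_0,w+w_0+2i\bar z_0 z)$ taking $p$ to the new origin, followed by the standard normal-form reduction; on our model, since all CM coefficients of weight $\leq 2$ vanish, no trace correction can enter the leading $z_0^2$ term, and direct expansion of $(z+z_0)^4(\bar z+\bar z_0)^4$ gives $\widetilde A^0_{24}(p)=6z_0^2+O((|z_0|+|t_0|)^3)$. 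Thus $r(z,\bar z,t)=c'z^2+O((|z|+|t|)^3)$ near the origin for a nonzero constant $c'$, and applying $L_1\sim\partial_z$ twice yields $L_1^2 r(0)=2c'\neq 0$. The main obstacle is precisely the last identification: justifying carefully that the higher-order CM normalizations do not disturb the leading $z_0^2$ contribution requires careful weight-degree bookkeeping, which is the essential technical content of the proof; an equivalent alternative is to compute Tanaka--Webster torsion and curvature directly from \eqref{CMcontact} on the model and read off $r$ and its derivatives, which is routine but tedious.
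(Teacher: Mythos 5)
Your first half coincides with the paper's: both establish that $Q_{;11}$ is a CR invariant of weight $3$ (the paper reads this off from $Q_{;11}=s(x)/|\lambda|^6$ and the substitution $|\lambda|^2=e^u$, which is the same as your complex-weight $(4,4)$ count) and then invoke Graham's one-dimensionality of the weight-$3$ invariants to get $Q_{;11}[\theta]=cA^0_{44}$. Where you genuinely diverge is the nonvanishing of $c$. The paper never evaluates anything on a model: it defers to Corollary \ref{CorKzzzz=0}, i.e., for unit circle bundles over compact Riemann surfaces $Q_{;11}$ is a multiple of $K_{;\bar z\bar z zz}$, Calabi's lemma shows $K_{;\bar z\bar z zz}=0\Rightarrow K_{;\bar z\bar z}=0\Rightarrow Q=0$, and since non-spherical circle bundles exist the invariant $Q_{;11}$ is not identically zero, forcing $c\neq 0$. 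This is an indirect, global argument that reuses machinery (Theorem \ref{QisGauss}) needed anyway for the main theorem, at the cost of giving no information about $c$ beyond $c\neq 0$. Your route --- evaluating on $\im w=|z|^2+z^4\bar z^4$ and showing $s(0)=L_1^2r(0)\neq 0$ via the recentered coefficient $\widetilde A^0_{24}(p)=6z_0^2+O(3)$ --- is local, self-contained, and in principle computes $c$ explicitly. Your reductions $r(0)=0$ and $(L_1r)(0)=0$ are sound (the candidate linear contributions you list are the only ones, since any product of normal-form coefficients already exceeds the relevant weight). The one step that still carries real content is the one you flag: after the Heisenberg translation the pulled-back equation acquires a term $16|z_0|^2z^3\bar z^3$ violating the trace condition $A^0_{33}=0$, and one must check that restoring the normal form does not pollute the $z_0^2$-part of $\widetilde A^0_{24}$; rotational equivariance (the correction is driven by a rotation-invariant coefficient $|z_0|^2$, while $A^0_{24}$ transforms with type $e^{2i\alpha}$) disposes of this, as does the possible conformal factor relating $r(p)$ to $\widetilde A^0_{24}(p)$, which is $1+O(|z_0|)$ and so cannot alter the pure $z_0^2$ coefficient. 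With that bookkeeping supplied, your proof is complete and arguably more informative than the paper's, though longer to make airtight.
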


\begin{proof} To prove Theorem \ref{Thm-wt3}, we shall show that $Q_{;11}$ is a (nontrivial) CR invariant of weight $3$. In view of Theorem 2.1 in \cite{Graham87}, which states that the space of CR invariants of weight 3 is 1-dimensional and spanned by $A^0_{44}$, we can then conclude that there exists a constant $c$ such that \eqref{Q11} holds.  To prove that $c\neq0$, it suffices to show that $Q_{;11}$ is not zero for some CR manifold $M$. We shall in fact show (Corollary \ref{CorKzzzz=0} below) that for unit circle bundles $M$ over compact Riemann surfaces, the identity $Q_{;11}=0$ characterizes those that are locally spherical. Since there clearly are such $M$ (these include all boundaries of complete circular domains) that are not locally spherical, we deduce that $c\neq 0$.

Recall now (e.g., \cite{Graham87}, \cite{Hirachi93}) that a pseudohermitian invariant $I(\theta)$ (computed as a polynomial in covariant derivatives of the curvature and torsion of the pseudohermitian structure given by a contact form $\theta$, or as a polynomial in the Chern--Moser coefficients $A^j_{kl}$) is a CR invariant of weight $\kappa$ if for any other contact form $\tilde\theta=e^u\theta$, $u\in\bC^\infty(M)$, we have
$$
I(\tilde \theta)=e^{-\kappa u}I(\theta).
$$
Since $Q_{;11}$ is an invariant function on the bundle $B$ of the form \eqref{Q11-3}, it is clear, by taking $|\lambda|^2=e^{u}$, that $Q_{;11}$ is a CR invariant of weight $\kappa=3$. As mentioned above, it follows from Corollary \ref{CorKzzzz=0} that this invariant is nontrivial. This completes the proof of Theorem {Thm-wt3}.
\end{proof}

We may also reformulate the result of the discussion above as follows:

\begin{theorem}\label{Thm-inv}
The invariant function $Q_{;11}$ is a nontrivial CR invariant of weight $\kappa=3$.
\end{theorem}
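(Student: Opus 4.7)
The plan is to deduce this from the material already set up on the way to Theorem~\ref{Thm-wt3}; only two ingredients are really needed, namely the correct conformal scaling and a witness of nontriviality.

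First, I would recall the working definition of a CR invariant of weight $\kappa$: a pseudohermitian quantity $I(\theta)$ qualifies if
\[
I(e^u\theta)=e^{-\kappa u}I(\theta),\qquad u\in C^\infty(M;\bR).
\]
The function $Q_{;11}$ itself is already defined invariantly on the bundle $B$ through iterated covariant differentiation against the canonical coframe \eqref{Ccoframe}; the only real question is how its evaluation $Q_{;11}[\theta]$ against a contact form transforms under a conformal change $\theta\mapsto e^u\theta$.

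The scaling law I would then read off directly from the explicit formula \eqref{Q11-3}, namely $Q_{;11}=s(x)/|\lambda|^6$, expressed in the fiber coordinates associated with the normalized contact form \eqref{normtheta}. Replacing $\theta$ by $\tilde\theta=e^u\theta$ forces the fiber coordinate to satisfy $|\tilde\lambda|^2=e^{-u}|\lambda|^2$, since the canonical one-form $\Omega=|\lambda|^2\theta$ is invariantly defined on $B$. Evaluating $Q_{;11}[\theta]$ and $Q_{;11}[\tilde\theta]$ at the sections $\lambda=1$ and $\tilde\lambda=1$ respectively, \eqref{Q11-3} then immediately gives $Q_{;11}[\tilde\theta]=e^{-3u}Q_{;11}[\theta]$, which is precisely the weight $\kappa=3$ transformation rule. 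This step is entirely formal.

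The main obstacle is therefore nontriviality, and for this I would defer to the subsequent sections. Specifically, I would invoke the forthcoming Corollary~\ref{CorKzzzz=0}, which characterizes the locally spherical unit circle bundles $M$ over compact Riemann surfaces by the global vanishing of $Q_{;11}$. Since that class evidently contains examples which are not locally spherical (e.g.\ boundaries of complete circular domains in $\bC^2$ with nonvanishing Cartan invariant $Q$), there must exist some $M$ on which $Q_{;11}\not\equiv 0$, so the invariant cannot be identically zero. Thus the substantive content of Theorem~\ref{Thm-inv} is actually packaged inside Corollary~\ref{CorKzzzz=0}, and the statement above is essentially a repackaging of the already-established scaling identity together with that existence result.
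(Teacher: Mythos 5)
Your proposal is correct and follows essentially the same route as the paper: the paper also reads off the weight-$3$ scaling directly from the form $Q_{;11}=s(x)/|\lambda|^6$ in \eqref{Q11-3} together with $\Omega=|\lambda|^2\theta$ (taking $|\lambda|^2=e^{u}$), and likewise delegates nontriviality to Corollary~\ref{CorKzzzz=0} via non-spherical circle bundles. Your spelling-out of the relation $|\tilde\lambda|^2=e^{-u}|\lambda|^2$ is just a slightly more explicit version of the same step.
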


\section{Circle bundles over Riemann surfaces}\label{S:hlb}

Let $X$ be a Riemann surface (complex manifold of dimension 1) and $\pi\colon L\to X$ a positive holomorphic line bundle, with $(\cdot,\cdot)$ a positively curved metric on $L$, and endow $X$ with the K\"ahler metric $ds^2$ induced by the curvature of $L$. Let $L^*$ be the dual line bundle, equipped with the dual metric, and $D$ the unit disk bundle in $L^*$. It is well known that $D$ is a strictly pseudoconvex domain. We shall mainly be interested in its boundary $M:=\partial D$, the unit circle bundle in $L^*$, which is then a strictly pseudoconvex, three dimensional CR manifold given by
$$
M=\{(x,\ell^*)\in L^*\colon |\ell^*|^2_x=1\}.
$$
If $s_0\colon U\subset X\to L$ is a nonvanishing local holomorphic section, then in the induced local trivialization $L^*|_U\cong U\times\bC$ with coordinates $(z,\tau)\in U\times \bC$, the three dimensional CR manifold $M$ is given by
\begin{equation}\label{Meq}
|\tau|^2h(z,\bar z)^{-1}=1,
\end{equation}
where $h(z,\bar z)=|s_0|_z^2$. The assumption that the curvature of $L$ is positive means that
\begin{equation}\label{Lcurv}
i \Theta:=-i \partial\bar\partial \log h>0.
\end{equation}
If we use polar coordinates $\tau=re^{it}$ in the fibers and $(z,t)\in \bC\times \bR$ as local coordinates on $M$, then
\begin{equation}\label{thetahat}
\hat \theta=dt+\frac{i}{2}(\partial \log h-\bar\partial \log h)
\end{equation}
is a contact form on $M$ that is compatible with the CR structure, and
\begin{equation}
d\hat \theta=\frac{i}{2}(\bar\partial \partial \log h-\partial\bar \partial \log h)=-i\partial\bar \partial \log h.
\end{equation}
We shall use the notation
\begin{equation}\label{Dnotation}
D:=\frac{\partial}{\partial z},\quad \Delta:=4D\bar D,
\end{equation}
so that
$$
d\hat\theta=-iD\bar D\log h\,  dz\wedge d\bar z=ia^{-1}\,  dz\wedge d \bar z
$$
where $a=a(z,\bar z)$ is the function
\begin{equation}\label{a}
a:=(-D\bar D\log h)^{-1}=\left(-\frac{1}{4}\Delta \log h\right)^{-1}>0.
\end{equation}
Thus, with
\begin{equation}\label{theta-theta^1}
\theta:=a\hat\theta, \theta^1:=dz
\end{equation}
we have
\begin{equation}\label{dtheta_0}
d\theta=i\, \theta^1\wedge \theta^{\bar 1} -\theta\wedge\frac{da}{a}.
\end{equation}
In other words, we can use $x=(z,t)$ and the forms in \eqref{theta-theta^1} to set up Cartan's bundle $B$ as described in the previous section. In this case, the function $b=b(x)$ in \eqref{normtheta} is independent of the circle coordinate $t$, and
\beq\label{b=Da/a}
b=b(z,\bar z)=-Da/a=-D \log a(z,\bar z)=D\log\, (-D\bar D\log h).
\eeq
Next, recall the invariant function $Q$ (Cartan's tensor) in \eqref{Q=}. The direct computation in \cite{JacobowitzBook}  shows that (see pp. 126 and 140 in \cite{JacobowitzBook}), in the chosen coordinate system $x=(z,t)$, the function $r=r(x)$ in \eqref{Q=} is a function of $z$ alone, explicitly computed from the function $b=b(z,\bar z)$ in \eqref{b=Da/a}. In fact, $r$ is obtained by applying a third order differential operator to $\bar b$ (see \cite{JacobowitzBook}, eq. (47) on p. 126):
\begin{equation}\label{r}
r=\frac{1}{6}(\bar D^2D \bar b-3\bar b D\bar D \bar b +2\bar b^2\bar D\bar b -D\bar b\bar D\bar b).
\end{equation}
Recall that the Riemann surface $X$ is calibrated by the positive holomorphic line bundle $L$, i.e., equipped with the K\"ahler metric induced by the curvature of metric $(\cdot,\cdot)$ on $L$. In the local chart $x=(z,t)$ in $U\subset X$, we then have
$$
ds^2=e^{2\phi}|dz|^2,\quad 2\phi:=\log(-D\bar D\log h)=-\log a.
$$
We shall denote by $K$ the Gauss curvature of $ds^2$,
\begin{equation}\label{K}
K:=-e^{-2\phi}\Delta\phi=-4e^{-2\phi}D\bar D\phi.
\end{equation}
For a smooth, real-valued function $f$, we shall denote by $f_{;z}$, $f_{;\bar z}$, $f_{z^2}:=f_{;zz}$, \ldots, $f_{;z^k\bar z^k}$, etc., the repeated covariant derivatives with respect to $z$ (in the $(1,0)$ direction) and $\bar z$ (in the $(1,0)$ direction) in the unitary coframe $e^\phi dz$; i.e., since the (dual) connection form in this case equals $-(\partial-\bar\partial)\phi$ (e.g., \cite{GHbook}, p. 77), we have $f_{;z}=e^{-\phi}Df$, $f_{;\bar z}= e^{-\phi}\bar Df$ and inductively
\begin{equation}\label{Kzz}
\begin{aligned}
f_{;z^k \bar z^lz} &=e^{-\phi}(Df_{;z^k\bar z^l}+(l-k)(D\phi)f_{;z^k\bar z^l})=e^{(k-l-1)\phi}D(e^{(l-k)\phi}f_{;z^k\bar z^l})\\
f_{;z^k \bar z^l\bar z} &=e^{-\phi}(\bar Df_{;z^k\bar z^l}+(k-l)(\bar D\phi)f_{;z^k\bar z^l})=e^{(l-k-1)\phi}\bar D(e^{(k-l)\phi}f_{;z^k\bar z^l})
\end{aligned}
\end{equation}

\begin{theorem}\label{QisGauss} The invariant functions $Q$ and $Q_{;11}$ are related to the Gauss curvature $K$ of $(X,ds^2)$ via:
\beq\label{GaussQs}
Q=-\frac{e^{4\phi}}{12}\ \frac{K_{;\bar z\bar z}}{\lambda\bar\lambda^3},\quad Q_{;11}= -\frac{e^{6\phi}}{12}\ \frac{K_{;\bar z\bar zz z}}{|\lambda\bar\lambda|^3}.
\eeq
\end{theorem}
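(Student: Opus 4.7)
The plan is to reduce everything to explicit computations in the local coordinates $x=(z,t)$ using the contact form \eqref{theta-theta^1}. The crucial observation, which I would establish first, is that the normalizing function $b$ in \eqref{normtheta} is an exact logarithmic derivative of the Gauss data. Since $2\phi=-\log a$ and $b=-D\log a$ by \eqref{b=Da/a}, we have
\[
b=2D\phi,\qquad \bar b=2\bar D\phi,
\]
which converts every occurrence of $b$ or $\bar b$ in Jacobowitz's formula \eqref{r} into a derivative of $\phi$, and puts us in a position to compare with $K=-4e^{-2\phi}D\bar D\phi$.

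For the first identity in \eqref{GaussQs}, I would substitute $\bar b=2\bar D\phi$ into \eqref{r} and expand to obtain
\[
r=\tfrac{1}{3}\bigl(D\bar D^3\phi-6\,\bar D\phi\,D\bar D^2\phi+8(\bar D\phi)^2\bar D^2\phi-2\,D\bar D\phi\,\bar D^2\phi\bigr).
\]
Independently, I would compute $K_{;\bar z\bar z}$ using the covariant-derivative rules \eqref{Kzz} applied to $K=-4e^{-2\phi}D\bar D\phi$. The connection weights make $K_{;\bar z\bar z}=\bar D(e^{-2\phi}\bar D K)=e^{-2\phi}(\bar D^2K-2\bar D\phi\,\bar D K)$, and expanding $\bar D K$, $\bar D^2 K$ from the definition of $K$ gives exactly $-4e^{-4\phi}$ times the parenthesis above. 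Hence $r=-\tfrac{e^{4\phi}}{12}K_{;\bar z\bar z}$, and division by $\lambda\bar\lambda^3$ gives the first formula.

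For $Q_{;11}$, I would invoke the formula \eqref{Q11-2}, which in our circle-bundle setup reads $Q_{;11}=s(x)/|\lambda|^6$ with $s(x)$ as in \eqref{s(x)}. Because $r$ and $b$ are independent of $t$ in this setting, the dual vector field $L_1$ acts on them simply as $D$, so $s(x)=D^2r-6(Dr)(D\phi)+r(8(D\phi)^2-2D^2\phi)$. Using the already-established $r=-\tfrac{e^{4\phi}}{12}F$ with $F:=K_{;\bar z\bar z}$, a direct differentiation yields, after cancellation of the $(D\phi)^2F$ terms,
\[
s(x)=-\tfrac{e^{4\phi}}{12}\bigl(D^2F+2(D\phi)DF+2(D^2\phi)F\bigr).
\]
Separately, applying \eqref{Kzz} to $F=K_{;\bar z\bar z}$ (with weights $k=0,l=2$, then $k=1,l=2$) gives $K_{;\bar z\bar z zz}=F_{;zz}=e^{-2\phi}\bigl(D^2F+2(D\phi)DF+2(D^2\phi)F\bigr)$. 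Comparing these yields $s(x)=-\tfrac{e^{6\phi}}{12}K_{;\bar z\bar z zz}$, and dividing by $|\lambda|^6$ completes the proof.

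The computations are all elementary; the main bookkeeping obstacle is to correctly match the third-order operator \eqref{r} (written in bare $D$, $\bar D$) with the $e^{-2\phi}$-weighted unitary-coframe covariant derivatives of $K$. The identification $b=2D\phi$ makes this matching essentially mechanical, and the second formula in \eqref{GaussQs} then follows from a single additional pair of applications of \eqref{Kzz} combined with the Cartan-bundle formula \eqref{s(x)}.
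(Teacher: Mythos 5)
Your proposal is correct and follows essentially the same route as the paper: the first identity is the same direct comparison of Jacobowitz's formula \eqref{r} (with $\bar b=2\bar D\phi$) against $K_{;\bar z\bar z}=e^{-2\phi}(\bar D^2K-2(\bar D\phi)\bar DK)$, and the second reduces, exactly as in the paper, to verifying $s=-\tfrac{e^{6\phi}}{12}K_{;\bar z\bar z zz}$ with $s$ as in \eqref{s(z)}. The only cosmetic difference is that the paper organizes that last step by writing $K_{;\bar z\bar z zz}$ as the nested divergence $e^{-2\phi}D(e^{-2\phi}D(e^{2\phi}K_{;\bar z\bar z}))$ and substituting $K_{;\bar z\bar z}=-12e^{-4\phi}r$, whereas you expand $s$ by the Leibniz rule and separately compute $F_{;zz}$ via \eqref{Kzz}; these are the same calculation run in opposite directions.
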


\begin{proof} The first identity in \eqref{GaussQs} was already observed in \cite{EDumb15}, Proposition 4.1 (but note that in that paper the complex conjugate of $Q$ was considered). The proof is a direct computation of $K_{;\bar z\bar z}$, using the expressions $$
K=-4e^{-2\phi}\bar D D\phi,\quad K_{;\bar z\bar z}=\bar D(e^{-\phi}(e^{-\phi}\bar DK))=e^{-2\phi}(\bar D^2K-2(\bar D\phi)\bar DK),
$$
and comparing the result with \eqref{r}, recalling that $\bar b=2\bar D\phi$. To obtain the second identity in \eqref{GaussQs}, we recall that $Q_{;11}$ is of the form \eqref{Q11-3}, where $s$ in this case is a function of $z$ alone, $s=s(z,\bar z)$, given by \eqref{s(x)}, which becomes
\beq \label{s(z)}
s=D^2r-3(Dr)b+r(2b^2-Db).
\eeq
We also have
\beq\label{Kzzzz}
\begin{aligned}
K_{;\bar z\bar z z z} &=e^{-2\phi}D(e^{\phi}(e^{-3\phi}D(e^{2\phi} K_{;\bar z\bar z})))\\
&= e^{-2\phi}D(e^{-2\phi}D(e^{2\phi} K_{;\bar z\bar z})).
\end{aligned}
\eeq
By the first identity in \eqref{GaussQs}, we have
$$
K_{;\bar z\bar z}=-12e^{-4\phi}r
$$
and, hence, by \eqref{Kzzzz}
\beq
-\frac{1}{12}K_{;\bar z\bar z z z}=e^{-2\phi}D(e^{-2\phi}D(e^{2\phi} (e^{-4\phi}r)))=e^{-2\phi}D(e^{-2\phi}D(e^{-2\phi}r))
\eeq
By expanding this, comparing with \eqref{s(z)} and recalling $b=2D\phi$, we conclude that the second identity in \eqref{GaussQs} holds.
\end{proof}

\begin{remark} We note that there is a similar local divergence form in general for $s(x)$ in \eqref{s(x)} provided we can find a function $u$ such that $b=L_1u$. It can be verified by direct calculation that
\beq\label{generaldiv}
s=e^{2u}L_1(e^{-u}L_1(e^{-u}r)).
\eeq
This fact is used in the next section.
\end{remark}

We may now prove the following result, which has been alluded to above.

\begin{corollary}\label{CorKzzzz=0} Let $X$ be a compact Riemann surface, $(L,h)\to X$ a holomorphic line bundle with positively curved metric $h=(\cdot,\cdot)$, and $D$ the unit disk bundle in the dual line bundle $(L^*,h^{-1})\to X$. Then CR invariant function $Q_{;11}$ vanishes on the unit circle bundle $M:=\partial D$ if and only if $M$ is locally spherical.
\end{corollary}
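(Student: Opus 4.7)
The plan is to combine Theorem \ref{QisGauss} with an integration by parts on the compact Riemann surface $X$. The forward direction is immediate: if $M$ is locally spherical, then by Cartan's theorem \cite{Cartan33} the function $Q$ vanishes identically on $M$, so every covariant derivative of $Q$, including $Q_{;11}$, also vanishes on $M$.

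For the converse, assume $Q_{;11}\equiv 0$ on $M$. By the second identity in Theorem \ref{QisGauss}, and the fact that $K_{;\bar z\bar z z z}$ carries unitary weight $2-2=0$ and so descends to a well-defined scalar function on $X$, this hypothesis is equivalent to $K_{;\bar z\bar z z z}\equiv 0$ on $X$. The goal is then to deduce $K_{;\bar z\bar z}\equiv 0$ on $X$; the first identity in Theorem \ref{QisGauss} will yield $Q\equiv 0$ on $M$, and Cartan's theorem \cite{Cartan33} will give local sphericity.

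The step $K_{;\bar z\bar z z z}\equiv 0\Rightarrow K_{;\bar z\bar z}\equiv 0$ will rest on the identity
$$
\int_X K\cdot K_{;\bar z\bar z z z}\,dV_X \;=\; \int_X |K_{;\bar z\bar z}|^2\,dV_X,
$$
valid on the closed surface $X$. To establish this, I would use the divergence form already produced in the proof of Theorem \ref{QisGauss}, namely $K_{;\bar z\bar z z z}=e^{-2\phi}D(e^{-2\phi}D(e^{2\phi}K_{;\bar z\bar z}))$, together with $dV_X = e^{2\phi}\,dx\wedge dy$ in a chart $z=x+iy$. This reduces the left-hand integrand locally to $K\cdot D(e^{-2\phi}D(e^{2\phi}K_{;\bar z\bar z}))\,dx\wedge dy$. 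Two applications of integration by parts in $\partial_z$ transfer both $D$-derivatives onto $K$, producing $D(e^{-2\phi}DK)=K_{;zz}$ on the other factor; since $K$ is real, $K_{;zz}=\overline{K_{;\bar z\bar z}}$, and the integrand collapses to $|K_{;\bar z\bar z}|^2\,dV_X$, as claimed. Once the displayed identity is in hand, $K_{;\bar z\bar z z z}\equiv 0$ forces $\int_X|K_{;\bar z\bar z}|^2\,dV_X=0$, hence $K_{;\bar z\bar z}\equiv 0$, which completes the proof.

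The main obstacle is justifying these integrations by parts globally: the individual factors $K_{;\bar z\bar z}$, $e^{\pm 2\phi}$, and $dx\wedge dy$ each transform nontrivially under holomorphic coordinate changes on $X$, so a naive chart-by-chart computation is not directly legal. The cleanest way to handle this is to interpret the identity via the Kähler adjoint relation $(\nabla^{1,0})^\ast=-\nabla^{0,1}$, applied twice, with $K_{;\bar z\bar z}$ viewed as a section of $\overline{K_X}^{\otimes 2}$ and $K$ as a scalar; since $\partial X=\emptyset$, Stokes' theorem contributes no boundary term, and the formal local calculation above is then a faithful expression of the resulting global identity.
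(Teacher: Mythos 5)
Your proof is correct and follows essentially the same route as the paper: both reduce via Theorem \ref{QisGauss} to the statement that $K_{;\bar z\bar z z z}\equiv 0$ on the compact surface $X$ forces $K_{;\bar z\bar z}\equiv 0$, and handle the easy direction through the vanishing of $Q$. The only difference is that the paper cites Calabi's lemma (\cite{Calabi82}, p.~273) for that key implication, whereas you reprove it by the same integration-by-parts/adjoint argument that underlies Calabi's proof (and that the paper itself carries out in the CR setting in Proposition \ref{CalabiProp}), so no genuine gap remains.
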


\begin{proof}
By Theorem \ref{QisGauss}, $Q_{;11}$ vanishes on $M$ if and only if $K_{;\bar z\bar z z z}=0$ on $X$. Calabi proved (\cite{Calabi82}, Lemma on p. 273) that this implies (and of course follows from) $K_{;\bar z\bar z}=0$ on $M$, which implies (and follows from), again by Theorem \ref{QisGauss}, that Cartan's 6th order invariant $Q$ vanishes on $M$. The latter is well known to be equivalent to $M$ being locally spherical.
\end{proof}

\section{CR manifolds with transverse symmetry}\label{S:transym}

Let $M=M^3$ be a three dimensional, strictly pseudoconvex CR manifold with  transverse symmetry. In other words, there is a smooth real vector field $T^0$ on $M$ such that
\begin{itemize}
\item[(a)]  $T^0$ is an {\it infinitesimal CR automorphism}, i.e., generates locally a 1-parameter family of CR automorphisms of $M$; and
\item[(b)]  $T^0$ is transverse to the complex tangent space $H_p:=\re T^{1,0}_pM$ at every point $p\in M$.
\end{itemize}
It is well known (see \cite{BER99a}) that (a) is equivalent to $T^0$ having the property that $[T^0,L_1]$ is a $(1,0)$-vector field for every $(1,0)$-vector field $L_1$. In particular, $T^0$ is the Reeb vector field for a uniquely determined contact form $\theta_0$, i.e., there is a contact form $\theta_0$ such that
\beq
\left<\theta_0,T^0\right> =1,\quad T^0\lrcorner d\theta_0=0.
\eeq
Indeed, it was proved in \cite{BRT85} that near a point $p\in M$, one can find local coordinates $x=(z,t)\in \bC\times \bR$, vanishing at $p$, and a local $(1,0)$-vector field $L^0_1$ spanning $T^{1,0}M$ near $p$ such that
\beq\label{transcoord}
T^0=\frac{\partial}{\partial t},\quad L^0_1=\frac{\partial}{\partial z}-f\frac{\partial}{\partial t},\quad \theta_0=dt+fdz+\bar fd\bar z,
\eeq
for some smooth function $f$ near $p=(0,0)$ such that $f(p)=0$ and $f$ is a function of $z$ alone, $f=f(z,\bar z)$. Thus, we have
\beq\label{Levi-1}
d\theta_0=(D\bar f-\bar Df)dz\wedge d\bar z.
\eeq
The strict pseudoconvexity implies that the purely imaginary function $D\bar f-\bar Df$ is nonzero. By replacing $T^0$ by $-T^0$ if necessary, we may assume that we have
\beq\label{udef}
D\bar f-\bar Df=ie^{2\phi},
\eeq
where $\phi=\phi(z,\bar z)$ is a smooth real-valued function. Thus, we may rewrite \eqref{Levi-1} as \beq\label{Levi-2}
d\theta_0=ie^{2\phi}\theta^1\wedge\theta^{\bar 1}, \quad \theta^1:=dz.
\eeq
The contact form $\theta_0$ defines a pseudohermitian structure \cite{Webster78} on $M$ and $(\theta_0,\theta^1, \theta^{\bar 1})$ is a local admissible coframe in this pseudohermitian structure; the reader is referred to \cite{Webster78} and \cite{Lee88} for basic facts regarding pseudohermitian structures. The fact that the Reeb vector field is an infinitesimal automorphism implies that the torsion $\tau_1=A_{11}\theta^1$ vanishes, and thus the connection form $\omega_1{}^1$ is identified via the structure equation for $d\theta^1$ and symmetry requirement following from \eqref{Levi-2}, respectively,
\beq\label{struc-1}
d\theta^1=\omega_1{}^1\wedge\theta^1, \quad \omega_{1\bar 1}+\omega_{\bar 1 1}=dh_{1\bar 1},
\eeq
where $\theta^1=dz$, $\omega_{\bar 1 1}=\overline{\omega_{1\bar 1}}$ and we use the Levi form $h_{1\bar 1}:=e^{2\phi}$ to raise and lower indices. Since $d\theta^1=d^2z=0$ and
\beq
dh_{1\bar 1}=d(e^2\phi)=e^{2\phi}(2D\phi\, dz+2 \bar D\phi\, d\bar z)
\eeq
we conclude then from \eqref{struc-1} that
\beq
\omega_{1\bar 1}:=h_{1\bar 1}\omega_1{}^1=2e^{2\phi} D\phi\, \theta^1,
\eeq
or equivalently
\beq\label{conn-1}
\omega_1{}^1 =2D\phi\,dz= 2D\phi\, \theta^1.
\eeq
The following proposition is then a direct consequence of the structure equation for $d\omega_1{}^1$:

\begin{proposition}\label{transKprop} The pseudohermitian scalar curvature $R:=R_1{}^1{}_1{}^1$ of $\theta_0$ is given by
\beq\label{curv-1}
R=-2 e^{-2\phi} D\bar D\phi.
\eeq
\end{proposition}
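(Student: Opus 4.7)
The plan is to read off $R$ directly from the curvature form $d\omega_1{}^1$, using the Tanaka--Webster structure equations in dimension three together with the vanishing of pseudohermitian torsion that the transverse symmetry enforces. First I would recall that in a three-dimensional strictly pseudoconvex pseudohermitian manifold with admissible coframe $(\theta_0,\theta^1,\theta^{\bar 1})$, the connection form $\omega_1{}^1$ satisfies a structure equation of the shape
\beq\label{plancurv}
d\omega_1{}^1 = R_{1\bar 1}\,\theta^1\wedge\theta^{\bar 1} + (\text{terms in }\theta_0),
\eeq
where the $\theta_0$-terms are built from the pseudohermitian torsion $A_{11}$ and its conjugate, and $R=h^{1\bar 1}R_{1\bar 1}=e^{-2\phi}R_{1\bar 1}$ is the scalar curvature. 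Since $T^0$ is an infinitesimal CR automorphism, the torsion $A_{11}$ vanishes (as already noted in the excerpt), so the correction terms in \eqref{plancurv} disappear and $R_{1\bar 1}$ can be extracted from the coefficient of $\theta^1\wedge\theta^{\bar 1}$ in $d\omega_1{}^1$.

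Next I would simply compute $d\omega_1{}^1$ from the explicit expression \eqref{conn-1}. Because $\phi=\phi(z,\bar z)$ has no $t$-dependence, the exterior derivative reduces to
\beq
d\omega_1{}^1 = 2\,d(D\phi)\wedge dz = 2\bigl(D^2\phi\,dz+\bar D D\phi\,d\bar z\bigr)\wedge dz = -2\,D\bar D\phi\,\theta^1\wedge\theta^{\bar 1},
\eeq
using $\theta^1=dz$ and $\theta^{\bar 1}=d\bar z$.

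Comparing with \eqref{plancurv} then gives $R_{1\bar 1}=-2D\bar D\phi$, and raising the index with $h^{1\bar 1}=e^{-2\phi}$ yields
\beq
R = e^{-2\phi}R_{1\bar 1} = -2e^{-2\phi}D\bar D\phi,
\eeq
which is \eqref{curv-1}. The only real care needed is to make sure the sign and normalization conventions for $R_{1\bar 1}$ match those used in Webster \cite{Webster78} and Lee \cite{Lee88}; modulo this bookkeeping, the statement is immediate from \eqref{conn-1} and the vanishing of the torsion. There is no genuine obstacle here---the proposition is essentially a direct calculation with the connection form already computed in \eqref{conn-1}.
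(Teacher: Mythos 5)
Your proof is correct and is essentially the paper's argument: the paper simply asserts that the proposition is ``a direct consequence of the structure equation for $d\omega_1{}^1$,'' and you have carried out exactly that computation, using the vanishing of the torsion $A_{11}$ to drop the $\theta_0$-terms and reading $R_{1\bar 1}=-2D\bar D\phi$ off from $d\omega_1{}^1=d(2D\phi\,dz)$ before raising the index with $h^{1\bar 1}=e^{-2\phi}$. The sign and normalization are consistent with the conventions of Webster and Lee used elsewhere in the paper (e.g.\ with the relation to the Gauss curvature $K=-4e^{-2\phi}D\bar D\phi$ in Section~\ref{S:hlb}), so no further checking is needed.
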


To be able to compare with the computations in Section \ref{S:general}, we renormalize $\theta:=e^{-2\phi}\theta_0$ so that \eqref{normtheta} holds (still with $\theta^1=dz$) with
\beq\label{btrans}
b=2D\phi, \quad b=b(z,\bar z).
\eeq
We observe at this point that we have an identity for $b=b(z,\bar z)$ of the same form as in Section \ref{S:hlb} with $2\phi$ in \eqref{btrans} playing the role of $-\log a$ in \eqref{b=Da/a}. Next, in order to compare with the computations in Section \ref{S:hlb}, we change the admissible coframe for the pseudohermitian structure of $\theta_0$ by $\hat \theta^1:=e^\phi dz=e^{\phi} \theta^1$. This normalizes the Levi form in this structure to $h_{1\bar 1}=1$. To compute the connection form $\hat\omega_1{}^1$ with respect to this coframe, we must consider the equations
\beq
d\hat \theta^1=d\phi\wedge \hat \theta 1=\hat\omega_1{}^1\wedge\hat \theta^1,\quad \hat\omega_{1\bar 1}+\hat \omega_{\bar 1 1}=0,
\eeq
which is easily seen to have the implication
\beq\label{hatconn}
\hat\omega_1{}^1=-(\partial\phi -\bar\partial\phi)=e^{-\phi}(\bar D\phi\, \hat \theta^{\bar 1} -D\phi\, \hat \theta^1).
\eeq
Next, we note that the dual $(1,0)$ vector field $\hat L_1$ corresponding to $ \hat \theta^1$ equals $e^{-\phi} L_1$. Thus, for any function $f$ that is independent of $t$, i.e., $f=f(z,\bar z)$, we have $\hat L_1f=e^{-\phi}Df$. We therefore observe that covariant differentiation of such $f$ with respect to the Tanaka--Webster connection in the coframe $\hat\theta^1$, in the the pseudohermitian structure of $\theta_0$ is the same as covariant differentiation of $f=f(z,\bar z)$ on the Riemann surface $X$ with coordinate $z$ and metric $ds^2=e^{2\phi}|dz|^2$ as in Section \ref{S:hlb};  E.g., if $f=f(z,\bar z)$ is a function on $M$ near $p=(0,0)$, then
\beq
f_{;1}=\hat L_1f=e^{-\phi}Df,\quad f_{;11}=e^{-2\phi}(D^2f-2(D\phi) Df),\ \ldots.
\eeq
With this observation, combined with Proposition \ref{transKprop} and the calculations yielding Theorem \ref{QisGauss}, we conclude that the following holds:

\begin{theorem}\label{QisGauss-trans} The invariant functions $Q$ and $Q_{;11}$ are related to the pseudohermitian scalar curvature $R$ of $M$ given by $\theta_0$ via:
\beq\label{GaussQs}
Q=-\frac{e^{4\phi}}{6}\ \frac{R_{;\bar 1\bar 1}}{\lambda\bar\lambda^3},\quad Q_{;11}= -\frac{e^{6\phi}}{6}\ \frac{R_{;\bar 1\bar 11 1}}{|\lambda\bar\lambda|^3}.
\eeq
\end{theorem}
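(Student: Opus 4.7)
The plan is to reduce the claim to Theorem \ref{QisGauss} by exploiting the structural parallel already established in the paragraphs preceding the statement. The key observation is that after the renormalization $\theta := e^{-2\phi}\theta_0$ with $\theta^1 = dz$, the structure equation \eqref{normtheta} holds with $b = 2D\phi$, which formally matches the circle bundle expression \eqref{b=Da/a} under the identification $a = e^{-2\phi}$. Since Jacobowitz's construction of Cartan's bundle $B$, together with the associated invariants $Q$ and $Q_{;11}$, depends on $(\theta,\theta^1)$ only through the function $b$, the formulas \eqref{Q=}, \eqref{r}, \eqref{Q11-3}, and \eqref{s(z)} transfer verbatim to the transverse symmetry setting, with $r$ and $s$ given by the same third-order differential expressions in $\bar b = 2\bar D\phi$ as in Section \ref{S:hlb}.

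Next I would compare the two curvature formulas. In Section \ref{S:hlb} the Gauss curvature is $K = -4e^{-2\phi}D\bar D\phi$, while Proposition \ref{transKprop} gives $R = -2e^{-2\phi}D\bar D\phi$; thus $K = 2R$ pointwise. Combined with the observation in the paragraph just before the statement---that covariant differentiation of $t$-independent functions in the Tanaka--Webster connection of $\theta_0$, expressed in the unitary coframe $\hat\theta^1 = e^\phi dz$, coincides with Riemann-surface covariant differentiation in the metric $e^{2\phi}|dz|^2$---this yields $K_{;\bar z\bar z} = 2R_{;\bar 1\bar 1}$ and $K_{;\bar z\bar z zz} = 2R_{;\bar 1\bar 1 1 1}$. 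Substituting into the identities of Theorem \ref{QisGauss} converts the constant $-1/12$ into $-1/6$ and delivers both claimed identities at once.

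The only substantive obstacle is justifying the identification of Tanaka--Webster covariant derivatives on the three-manifold $M$ with the Riemann-surface covariant derivatives of Section \ref{S:hlb}. This is handled by the connection form calculation \eqref{hatconn}, which shows that, acting on $t$-independent data, $\hat\omega_1{}^1$ reduces to the Kähler connection form $-(\partial\phi - \bar\partial\phi)$ of the metric $e^{2\phi}|dz|^2$ (equivalently, the iterated derivative rules \eqref{Kzz} are valid for the pseudohermitian connection on such functions). Once this identification is in place, the proof requires no new analytic input: the entire computation is a bookkeeping exercise copying the derivation of \eqref{Kzzzz}--\eqref{s(z)} from the proof of Theorem \ref{QisGauss} and tracking the factor of $2$ coming from $K = 2R$.
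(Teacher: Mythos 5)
Your proposal is correct and follows essentially the same route as the paper: the paper likewise reduces to Theorem \ref{QisGauss} by noting that $b=2D\phi$ matches the circle-bundle expression with $a=e^{-2\phi}$, that Tanaka--Webster covariant differentiation of $t$-independent functions in the coframe $\hat\theta^1=e^\phi dz$ agrees with the Riemann-surface covariant derivatives of Section \ref{S:hlb}, and that the change of constant from $-1/12$ to $-1/6$ comes exactly from $K=-4e^{-2\phi}D\bar D\phi$ versus $R=-2e^{-2\phi}D\bar D\phi$. You have simply made explicit the bookkeeping that the paper compresses into the sentence invoking Proposition \ref{transKprop} and ``the calculations yielding Theorem \ref{QisGauss}.''
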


\section{Proof of main result}\label{S:proof}

In this section, we shall prove the result stated in the introduction.

\noindent{\it Proof of Theorem $\ref{MainThmFeff}$.} It suffices to show that $b\eta_1=0$ on $M$ implies that $M$ is locally spherical, since the converse is clear, and moreover, if $D$ is simply connected and $M$ connected, it follows from the Riemann mapping theorem of Chern--Ji \cite{ChernJi96} that $D$ is biholomorphic to the unit ball $\bB^2$. Thus, to complete the proof of Theorem $\ref{MainThmFeff}$ it suffices to show that $M$ is locally spherical, provided $b\eta_1=0$ on $M$. Graham \cite{Graham87} showed that the space of CR invariants of weight 3 is one dimensional, spanned by $A^0_{44}$, and in particular $b\eta_1=4A^0_{44}$. Thus, if $b\eta_1=0$ on $M$, then it follows from Theorem \ref{Thm-wt3} that $Q_{;11}$ also vanishes on $M$. By Theorem \ref{QisGauss-trans}, we then conclude that $R_{;\bar 1\bar 1 1 1}=R_{;\bar 1\bar 1}{}^{\bar 1\bar 1}=0$ on $M$. We shall need the analog of Calabi's result used in the proof of Corollary \ref{CorKzzzz=0}:

\begin{proposition}\label{CalabiProp} If $f$ is a smooth function on a compact strictly pseudoconvex CR manifold $M=M^3$, and $f_{;\bar 1\bar 1}{}^{\bar 1\bar 1} =0$, then $f_{;\bar 1\bar 1}=0$.
\end{proposition}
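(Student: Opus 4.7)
My plan is to prove $\int_M |f_{;\bar 1\bar 1}|^2\,\theta\wedge d\theta = 0$, from which $f_{;\bar 1\bar 1}\equiv 0$ follows pointwise. Since $\overline{f_{;\bar 1\bar 1}} = \bar f_{;11}$, the first step would be to multiply the hypothesis $f_{;\bar 1\bar 1 1 1}=0$ by $\bar f$ and integrate against the pseudohermitian volume form:
\[
0 \;=\; \int_M \bar f\cdot f_{;\bar 1\bar 1 1 1}\,\theta\wedge d\theta.
\]

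The strategy is then to shift both $L_1$-derivatives from $f_{;\bar 1\bar 1 1 1}$ onto $\bar f$ by two successive pseudohermitian integrations by parts. Compactness of $M$ kills all boundary terms, so the leading outcome would be
\[
0 \;=\; \int_M \bar f_{;11}\,f_{;\bar 1\bar 1}\,\theta\wedge d\theta \;+\;\mathcal C \;=\;\int_M |f_{;\bar 1\bar 1}|^2\,\theta\wedge d\theta \;+\;\mathcal C,
\]
where $\mathcal C$ collects the correction terms produced by the integration by parts. Provided $\mathcal C$ vanishes (or has a controlled sign), this delivers the desired $L^2$-identity and concludes the argument.

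The main obstacle will be controlling $\mathcal C$. The corrections come from two distinct sources: (i) the pseudohermitian integration-by-parts formula for $L_1$ alone is not a clean total divergence but carries a term involving the Tanaka--Webster connection $\omega_1{}^1$; and (ii) reordering fourth-order covariant derivatives produces commutators of the schematic form $u_{;1\bar 1} - u_{;\bar 1 1} = ih_{1\bar 1}T(u)$, decorated with pseudohermitian scalar-curvature and torsion contributions. The plan for handling these is to reorganize the terms systematically: identify subcollections that are themselves total divergences (hence vanish on compact $M$), and absorb the remainder either into $\int_M |f_{;\bar 1\bar 1}|^2$ with the correct sign or into expressions of the form $f_{;\bar 1\bar 1 1 1}$ that can be recycled back into the hypothesis through further integration by parts. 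In the torsion-free setting relevant to the application in Section \ref{S:transym} --- where the canonical contact form $\theta_0$ is chosen and $f = R$ satisfies $T(R) = 0$ by transverse symmetry --- this reorganization simplifies dramatically, and the argument reduces to a direct CR analog of Calabi's integration-by-parts proof on compact Riemann surfaces.
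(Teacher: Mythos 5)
Your strategy is the right one, and it is the same as the paper's: the paper proves the proposition by the chain
\begin{equation*}
\int_M|f_{;\bar 1\bar 1}|^2\,\theta\wedge d\theta=\int_M f_{;\bar 1\bar 1}\,\bar f^{;\bar 1\bar 1}\,\theta\wedge d\theta=-\int_M f_{;\bar 1\bar 1}{}^{\bar 1}\,\bar f^{;\bar 1}\,\theta\wedge d\theta=\int_M f_{;\bar 1\bar 1}{}^{\bar 1\bar 1}\,\bar f\,\theta\wedge d\theta=0,
\end{equation*}
i.e.\ exactly your two integrations by parts read in the opposite direction, each step being one application of the divergence lemma of Lee \cite{Lee88} on the compact manifold $M$. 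However, your write-up is conditional on the vanishing (or sign control) of a correction term $\mathcal C$ that you do not actually compute, and this is where the proposal falls short of a proof. The point you are missing is that $\mathcal C=0$ identically, for a structural reason: in $f_{;\bar 1\bar 1}{}^{\bar 1\bar 1}$ the two contracted derivatives are both of the same type ($\nabla^{\bar 1}=h^{\bar 1 1}\nabla_1$) and are the \emph{outermost} derivatives. Transferring them one at a time onto $\bar f$ is a pure Leibniz-rule manipulation followed by the divergence lemma; it deposits on $\bar f$ two derivatives again of the same type, producing $\bar f^{;\bar 1\bar 1}$, which equals $\overline{f_{;\bar 1\bar 1}}$ with indices raised by the Levi form --- with no reordering of covariant derivatives anywhere. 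Hence the commutator identities $u_{;1\bar 1}-u_{;\bar 1 1}=ih_{1\bar 1}u_{;0}$ and their curvature/torsion decorations, which you list as the main obstacle, are simply never invoked, and the connection terms are exactly what the divergence lemma packages into a vanishing integral.

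A second, more serious concern with your fallback plan: you suggest that in the worst case the argument "reduces" to the torsion-free setting where $T(f)=0$. But the proposition is stated (and used) for an arbitrary smooth $f$ on an arbitrary compact strictly pseudoconvex $M^3$, with no symmetry hypothesis; an argument valid only under $A_{11}=0$ and $T(f)=0$ would prove a weaker statement than the one asserted. Fortunately no such retreat is needed: once you observe that no derivative reordering occurs, your own outline closes immediately and yields $\int_M|f_{;\bar 1\bar 1}|^2\,\theta\wedge d\theta=0$, hence $f_{;\bar 1\bar 1}\equiv 0$.
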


\begin{proof} This is a simple integration by parts argument, using the divergence lemma (a.k.a. Stokes Theorem) in \cite{Lee88}:
\begin{multline}
\int_M|f_{\bar 1\bar 1}|^2 \theta\wedge d\theta=\int_M f_{\bar 1\bar 1} \bar f^{;\bar 1\bar 1} \theta\wedge d\theta=-\int_M f_{\bar 1\bar 1}{}^{\bar 1} \bar f^{;\bar 1} \theta\wedge d\theta =\int_M f_{\bar 1\bar 1}{}^{\bar 1\bar 1} \bar f \theta\wedge d\theta=0,
\end{multline}
which proves the proposition.
\end{proof}

Proposition \ref{CalabiProp} with $f=R$ now completes the proof of Theorem $\ref{MainThmFeff}$, in view of the first identity in \eqref{GaussQs} of Theorem \ref{QisGauss-trans}. \qed


\def\cprime{$'$}

\end{document}